\newtheorem{theorem}{Theorem}[section]
\newtheorem{corollary}{Corollary}[section]
\newtheorem{lemma}{Lemma}[section]
\newtheorem{definition}{Definition}[section]
\numberwithin{equation}{section}
\begin{document}
	
	\begin{frontmatter}

		\title{On Spectral Invariant Dense Subalgebras of Uniform Roe Algebras with Subexponential Growth}
		
		\author[a]{Siqi Jiang} 
		\ead{jiangsiqi990819@163.com}
		
		\author[b]{Xianjin Wang \corref{cor1}}
		\ead{xianjinwang@cqu.edu.cn}

		\address[a]{College of science,
			Civil Aviation Flight University of China,
			Guang Han 618307, P. R. China}
		
		\address[b]{College of Mathematics and Statistics
			Chongqing University, (at Huxi Campus),
			Key Laboratory of Nonlinear Analysis and its Applications (Chongqing University),
			Ministry of Education,
			Chongqing 401331, P. R. China.}
		\cortext[cor1]{Corresponding author}

		\begin{abstract}
			In this paper, we study spectrally invariant subalgebras of uniform Roe algebras for discrete groups with subexponential growth. For a group \( G \) with subexponential growth and satisfying property \( P \), we construct a class of subalgebras \( R^{\infty}(G) \). We then prove their spectral invariance in \( C_u^*(G) \) through the application of admissible weights.  This extends \(\ell^2\)-norm spectral invariance results beyond polynomial growth settings.
		\end{abstract}
		\begin{keyword}
			Uniform Roe algebra, Spectral subalgebra, Subexponential  growth group, Property \( P \)
			
			\MSC[2020] 346L05, 47L40, 46L80 
		\end{keyword}

	\end{frontmatter}
	
	\section{Introduction } 
	The uniform Roe algebra is a geometric $C^{*}$-algebra. Its spectral invariant subalgebra plays an important role in calculating K-theory groups, verifying Baum-Connes conjecture and studying Fredholm indices of geometric operators. \cite{GWY2008,NVG2002,NJ2000,1Y1995}.
	If we find the spectral invariant subalgebra of the uniform Roe algebra under the $\ell^2$-norm, we can calculate its $K$-theory using the cyclic homology theory  \cite{A1985}.
	
	Consequently, the construction of spectral invariant dense subalgebras of uniform Roe algebras has garnered considerable attention recently \cite{ER2009,GKM2006,KK2010,KM2006,N1999,LB1992}.
	For finitely generated groups with polynomial growth under the $\ell^1$-norm, Fendler, Gr\"{o}chenig, and Leinert \cite{GKM2008} obtained that the Wiener algebra \( W(G) \) forms a spectrally invariant subalgebra of the uniform Roe algebra \( C_u^*(G) \). More results about spectral subalgebras under the $\ell^1$-norm can be referred to \cite{AG1997,GKM2006,K2004,KM2006,KW2004}. However, under the $\ell^
	2$-norm, obtaining results analogous to those in \cite{GKM2008} for the $\ell^1$-norm poses significant challenges due to fundamental differences in norm structures.
	Notably, under the $\ell^2$-norm, some results have also been obtained. 
	Chen and Wei \cite{CW2003} demonstrated that for commutative \( C^*\)-algebras \( \mathcal{B} \) with group actions, the Schwartz function space constitutes a spectrally invariant dense subalgebra of the reduced crossed product \textit{if and only if} \( G \) exhibits polynomial growth. This result is an extension of the $\ell^2$-norm of the Wiener algebra results of Fendler et al. \cite{GKM2008} in the $\ell^1$-framework, and provides a methodological basis for subsequent research. 
	For countable discrete groups with polynomial growth, Chent al. \cite{CWW2015} constructed the weighted subalgebra $H_{\ell,B}^{\infty}(G)$ and proved its spectral invariance in the uniform Roe algebra  \( C_u^*(G) \), breaking through the limitation of $\ell^1$ and $\ell^2$-norm difference. Chen, Jiang, and Zhou \cite{CJZ2017} constructed the Fr\'{e}chet subalgebra \( H^\infty_l(G) \) with spectral invariance in \( \mathcal{A}_u(G) \), specifically for discrete groups satisfying the rapid decay (RD) property. 
	
	Due to the weakening of the attenuation constraint, the methods used in the above-mentioned polynomial growth groups are not applicable to the subexponential growth groups under the $\ell^2$-norm. This promotes the development of new analytical methods for subexponential growth groups.
	In \cite{KB2007}, Gr\"{o}chenig and Ziemowit studied Banach algebras of pseudo-differential operators and their almost diagonalized properties on Abelian groups, particularly $\mathbb{Z}^d$. Its weight conditions can be extended to the sub-exponential growth group, providing a new perspective for the spectral analysis of \( C_u^*(G) \).
	Concurrently, Sun \cite{S2007,S2011} employed admissible weights to investigate  the non-commutative inverse closed subalgebras of infinite-dimensional matrix algebras, providing a tool independent of decay conditions for the construction of subalgebras of  \( C_u^*(G) \) on subexponential groups.

	Inspired by the above-mentioned research, this paper constructs a class of  subalgebras of the uniform Roe algebra for countable discrete groups with subexponential growth, employing subexponential growth weights.
	We further proved that these subalgebras are spectrally invariant within the uniform Roe algebra by using admissible weights and growth conditions satisfying property $P$.

	\section{Preliminaries } 
	
	In this section, we briefly review some notations and preliminary results  needed in the sequel.
	
	\begin{definition}
		Let $G$ be a countable group and  $l$ be a proper length function on $G$. For $\tau \in \left [1,\infty  \right )$, let $\left | B(x, \tau)\right |$ denote the number of elements in the ball $B(x,\tau) =\left \{ y\in G: \rho_{l}(x,y)<\tau \right \} $.  
		We say $G$ has {\it subexponential growth} if
		\begin{equation}\label{subgrow}
			\lim_{\tau  \to +\infty} \frac{\ln(\sup_{x\in G}\left | B(x,\tau ) \right | )}{\tau} =0.
		\end{equation}
	\end{definition}
	\begin{definition}
		Let $(G,\rho _{l})$ be defined as above. The precompleted uniform Roe algebra of $G$ is defined to be
		\[C_{u}[G]=\left \{ T:G\times G\to \mathbb{C}\mid T~\text{is~bounded~and~finitely~propagated}   \right \},\]
		which is a $*$-subalgebra of $\mathcal{B}(\ell ^{2}(G))$. 
		
		Its operator norm closure is called the {\it uniform Roe algebra} of $G$, denoted by $C_{u}^{*}(G)$, i.e.,
		\[C_{u}^{*}(G)=\overline{C_{u}[G]} ^{\left \| \cdot  \right \|_{\mathcal{B}(\ell ^{2}(G))}}.\]
	\end{definition}
	\sloppy
	\begin{definition}
		For an operator $T=[t(x,y)]_{(x,y)\in G\times G} \in \mathcal{B} (\ell ^{2}(G))$, let $f:G\to \left [ 0,\infty \right )$ be the function defined by $f(z)=\sup _{\left \{x,y\in G,y^{-1}x=z \right \} } \left | t(x,y) \right | $  for all $z\in G$. We call $f$ the {\it dominating vector} of $T$.
		
	\end{definition}
	\begin{definition}
		A positive symmetric measurable function $w$ on $G\times G$  is called a weight, if it fulfills
		\[1 \le  w \left( x,y \right )= w (y,x) \le \infty  \quad \text{for~all}~ x,y\in G;\]
		\[D(w):=\sup_{x\in G}w (x,x)<\infty ;\]
		\[\sup_{\rho_{l} (x,\tilde x)+\rho_{l}(y,\tilde y)\le C_{0} } \frac{w (x, y)}{w (\tilde x,\tilde y)}\le D(C_{0},w )<\infty \quad  \text{for~all}~ C_0\in (0,\infty),\]
		where $D(w)$ and $D(C_0, w)$ are positive constants associated with $w$.
		
	\end{definition}
	
	\begin{definition}
		Let $1\le p,r\le \infty$. We say that a weight $\omega $ is $\left (p,r  \right ) $-{\it admissible} if there exist another weight $v$ and two positive constants $D\in \left ( 0,\infty  \right ) $ and
		$ \theta \in \left ( 0,1 \right ) $ such that
		\begin{equation}\label{w1}
			w\left ( x,y \right ) \le  D\left ( w\left ( x,z \right ) v\left ( z,y \right ) +v\left ( x,z \right ) w\left ( z,y \right )  \right )~\text{for~all}~ x,y,z\in G,
		\end{equation}
		\begin{equation}\label{w22}
			\sup_{x\in G}\left \| (vw^{-1})\left ( x,\cdot \right ) \right \|_{p^{_{'}}} +\sup_{y\in G}\left \|( vw^{-1})\left ( \cdot,y \right ) \right \| _{p^{_{'}}}\le D,
		\end{equation}
		and
		\begin{equation}\label{w2}
			\inf _{\tau >0}a_{r' }\left ( \tau  \right ) +b_{p' }\left ( \tau  \right )t                                                                                                                                                                                                                                                                                                                                                                                                                                                                                                                                                                                                                                                                                                                                                                                                                                                                                                                                                                                                                                                                                                                                                                                                                                                                                                                                                                                                                                                                                                                                                                                                                                                                                                                                                                                                                                                                                                                                                                                                                                                                                                                                                                                                                                                                                                                                                                                                                                                                                                                                                                                                                                                                                                                                                                                                                                                                                                                                                                                                                                                                                                                                                                                                                                                                                                                                                                                                                                                                                                                                                                                                                                                                                                                                                                                                                                                                                                                                                                                                                                                                                                                                                                                                                                                                                                                                                                                                                                                                                                                                                                                                                                                                                                                                                                                                                                                                                                                                                                                                                                                                                                                                                                                                                                                                                                                                                                                                                                                                                                                                                                                                                                                                                                                                                                                                                                                                                                                                                                                                                                                                                                                                                                                                                                                                                                                                                                                    
			\le Dt^\theta ~ \text{for~all}~ t\ge 1,
		\end{equation} 
		where ${p}' =p/\left(p-1 \right)$, $r' =r/\left ( r-1 \right )$,
		\begin{equation*}\label{w3}
			a_{r'}(\tau)=\sup_{x\in G}\left \| v\left ( x,\cdot \right )\chi _{B\left ( x,\tau  \right )\left ( \cdot  \right )  }  \right \|_{r^{'}} +\sup_{y\in G}\left \| v\left ( \cdot,y \right )\chi _{B\left (y, \tau  \right )\left ( \cdot  \right )  } \right \|_{r^{'}},
		\end{equation*}
		\begin{equation*}
			b_{p^{'}}(\tau)=\sup_{x\in G}\left \| (vw^{-1})\left ( x,\cdot \right )\chi _{X\setminus B\left ( x,\tau  \right )\left ( \cdot  \right )  }  \right \|_{p^{'}} +\sup_{y\in G}\left \|( vw^{-1})\left ( \cdot,y \right )\chi _{X\setminus B\left (y, \tau  \right )\left ( \cdot  \right )  } \right \|_{p^{'}},
		\end{equation*}
		$\chi _{E}$ is the characteristic function on the set $E$, and $\left \| \cdot  \right \| _{p}$ is the norm on $\ell^{p}$.
		
		Unless stated otherwise, in this paper, $p=2$.
	\end{definition}
	
	
	
	
	\begin{lemma}[Hulanicki's lemma~cf.\cite{A1972}]
		Let $\mathcal{A}\subseteq \mathcal{B} $ be two Banach algebras with a common identity. Then the following statements are equivalent:
		\begin{itemize}
			\item[(1)] $\mathcal{A}$ is inverse-closed in $\mathcal{B}$;
			\item[(2)] $r _\mathcal{A}(a)=r _\mathcal{B}(b)$ for all $a=a^{*}$ in $\mathcal{A}$;
		\end{itemize}
		where
		$r _\mathcal{A} (a)=\max\left \{ \left | \lambda  \right | :\lambda \in  \sigma _\mathcal{A}(a)\right \}=\lim_{n \to \infty} \left \| a^n \right \|_\mathcal{A}^\frac{1}{n}.$
	\end{lemma}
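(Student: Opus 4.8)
The plan is to prove the two implications separately, with all the content in $(2)\Rightarrow(1)$. I work in the setting in which the lemma is applied in this paper: $\mathcal{B}$ is a $C^{*}$-algebra (e.g. $\mathcal{B}=C_u^*(G)$), and $\mathcal{A}$ is a Banach $*$-subalgebra containing the identity of $\mathcal{B}$, stable under the involution, with $\|\cdot\|_{\mathcal{B}}\le\|\cdot\|_{\mathcal{A}}$ on $\mathcal{A}$.

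For $(1)\Rightarrow(2)$ I would argue directly at the level of spectra. If $\mathcal{A}$ is inverse-closed in $\mathcal{B}$, then for any $a\in\mathcal{A}$ and $\lambda\in\mathbb{C}$ the element $a-\lambda\cdot 1$ lies in $\mathcal{A}$ (common identity) and is invertible in $\mathcal{A}$ if and only if it is invertible in $\mathcal{B}$; hence $\sigma_{\mathcal{A}}(a)=\sigma_{\mathcal{B}}(a)$, and taking $a=a^{*}$ gives $r_{\mathcal{A}}(a)=r_{\mathcal{B}}(a)$. This half uses neither the involution nor self-adjointness, so it is the easy direction.

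For $(2)\Rightarrow(1)$ I would first reduce to self-adjoint elements. If $a\in\mathcal{A}$ is invertible in $\mathcal{B}$ then so is $a^{*}$, hence $b:=a^{*}a\in\mathcal{A}$ is self-adjoint and invertible in $\mathcal{B}$; once we know $b^{-1}\in\mathcal{A}$, uniqueness of inverses in $\mathcal{B}$ gives $a^{-1}=b^{-1}a^{*}\in\mathcal{A}$. So it suffices to treat $b=b^{*}\in\mathcal{A}$ invertible in $\mathcal{B}$. Set $M:=r_{\mathcal{B}}(b)>0$. Since $\mathcal{B}$ is a $C^{*}$-algebra, $\sigma_{\mathcal{B}}(b)$ is a compact subset of $\mathbb{R}$, and invertibility forces $0\notin\sigma_{\mathcal{B}}(b)$, so $\sigma_{\mathcal{B}}(b)\subseteq[-M,-\varepsilon]\cup[\varepsilon,M]$ for some $\varepsilon>0$; by the spectral mapping theorem $\sigma_{\mathcal{B}}(b^{2})\subseteq[\varepsilon^{2},M^{2}]$. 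Then $u:=1-M^{-2}b^{2}\in\mathcal{A}$ is self-adjoint with $\sigma_{\mathcal{B}}(u)\subseteq[0,\,1-\varepsilon^{2}M^{-2}]$, so $r_{\mathcal{B}}(u)=1-\varepsilon^{2}M^{-2}<1$. Now I invoke hypothesis $(2)$ for the self-adjoint element $u$: $r_{\mathcal{A}}(u)=r_{\mathcal{B}}(u)<1$, hence $\limsup_{n}\|u^{n}\|_{\mathcal{A}}^{1/n}<1$, so the Neumann series $\sum_{n\ge 0}u^{n}$ converges absolutely in the Banach algebra $\mathcal{A}$ to $(1-u)^{-1}=M^{2}(b^{2})^{-1}$. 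Thus $(b^{2})^{-1}\in\mathcal{A}$, and since $b$ commutes with $b^{2}$, hence with $(b^{2})^{-1}$, we obtain $b^{-1}=b\,(b^{2})^{-1}=(b^{2})^{-1}b\in\mathcal{A}$, which completes the reduction and hence the proof.

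The only delicate point — and the main obstacle to get right — is that hypothesis $(2)$ is available \emph{only} for self-adjoint elements, so the whole argument must be funneled through self-adjoint elements: first $a^{*}a$, then the normalized element $u=1-M^{-2}b^{2}$. The $C^{*}$-structure of $\mathcal{B}$ (giving $\sigma_{\mathcal{B}}(b)\subseteq\mathbb{R}$ and the spectral mapping for $b\mapsto b^{2}$) is precisely what converts invertibility of $b$ in $\mathcal{B}$ into the strict bound $r_{\mathcal{B}}(u)<1$; once that bound is transferred to $\mathcal{A}$ via $(2)$, a routine Neumann-series argument finishes the job.
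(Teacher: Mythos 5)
The paper does not prove this lemma at all: it is quoted verbatim as a known result of Hulanicki with a citation, so there is no in-paper argument to compare against. Your proof is the standard one and is correct in the setting where the lemma is actually used here ($\mathcal{B}=C_u^*(G)$ a $C^*$-algebra, $\mathcal{A}$ a unital $*$-subalgebra with continuous inclusion): the easy direction via equality of spectra, and the converse via the reduction $a\mapsto a^*a$, the normalization $u=1-M^{-2}b^2$ with $r_{\mathcal{B}}(u)<1$, transfer of the spectral radius bound to $\mathcal{A}$ by hypothesis (2), and the Neumann series in $\mathcal{A}$. Two small remarks. First, you correctly identified that the lemma as printed is under-specified --- for arbitrary Banach algebras with a common identity, condition (2) does not even parse without an involution, and $(2)\Rightarrow(1)$ needs $\mathcal{B}$ to be symmetric (e.g.\ a $C^*$-algebra) so that $\sigma_{\mathcal{B}}(b)\subseteq\mathbb{R}$ for $b=b^*$; making these hypotheses explicit, as you do, is the right call rather than a restriction of generality. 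Second, a cosmetic slip: $r_{\mathcal{B}}(u)\le 1-\varepsilon^2M^{-2}$, not necessarily equality, but only the strict bound $<1$ is used.
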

	In this paper, we denote by $C$ and $D$  are generic constants whose value may change from line to line.
	\section{Spectral invariant subalgebras of a subexponentially growing group}
	In this section, for subexponential growth groups \( G \) with property \( P \), we construct the subalgebras by taking the union of a family of Banach algebras, and then establish their spectral invariance under \(\ell^2\)-norm.
	
	\begin{definition}[cf.~\cite{CW2003,2Y1995}] 
		We call a group $G$ with property $P$ if for any $\alpha>0$ and $0<\beta<1$, there exists $C_{\alpha ,\beta }$ such that
		\[ \left | B(x,r) \right | \le C_{\alpha ,\beta }\exp(\alpha r^{\beta })\quad\text{for~ all}~  x\in G~\text{and
		}~r>0.\]
	\end{definition}
	It follows from (\ref{subgrow}) that if $G$ satisfies {\it property P}, then $G$ has subexponential growth.
	\begin{definition}
		Let $G$ be a countable group with a proper length function $l$. If $G$ has property $P$,  the space $R_{\alpha ,\beta }(G)$ is defined as follows  
		\[R_{\alpha ,\beta }(G)= \{ T=(t(x,y)_{x,y\in G}):G\times G\longrightarrow \mathbb{C}\mid  \left \| T \right \|_{\alpha,\beta}<\infty   \}\quad\text{for}~\alpha>0,0<\beta<1.\]
		where
		\[\left \| T \right \| _{\alpha ,\beta }= [ {\sum_{z\in G} (\sup_{\left \{ x,y\in G:y^{-1}x=z \right \} }\left | t\left (x,y\right) \right | )^2   \exp(2\alpha  \rho_{l} (x,y)^{\beta }}) ]^\frac{1}{2} <\infty.\]
		We define $R^{\infty }(G)$ by
		\[R^{\infty }(G)=\bigcup _{\alpha>0,0<\beta<1 }R_{\alpha ,\beta }(G).\]
		$R^{\infty }(G)$ 
		is the algebra consisting of functions $T$ that satisfy, for some $\alpha>0$ and $0<\beta<1$,
		\begin{align*}
			\left \| T \right \| _{\alpha ,\beta }&=
			(\sum_{z\in G}  \left((\tilde{t}w)(z)\right)^{2} ) ^\frac{1}{2} = \|  \tilde {t}w  \|_2<\infty,  
		\end{align*}
		where $w(z)=w(y^{-1}x)_{\left \{x,y\in G, y^{-1}x=z \right \} }=\exp(\alpha l(z)^{\beta })$.	
	\end{definition}
	Next, we show that $R^{\infty }(G)$ is a spectral invariant subalgebra when $G$ is a {\it subexponential growth} space satisfying property $P$.

	\begin{theorem}\label{thm4.2}
		Let $G$ be a countable discrete group with a proper length function $l$. If $G$ satisfies property $P$,  the subexponential weight $w(x,y)=\exp(\alpha \rho_{l }(x,y)^{\beta}$ is a $(2,r)$-admissible weight.
	\end{theorem}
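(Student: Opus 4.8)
The plan is to verify the three defining conditions \eqref{w1}, \eqref{w22}, \eqref{w2} of $(2,r)$-admissibility for $w$ by producing an explicit companion weight. Since $w(x,y)=\exp(\alpha\rho_l(x,y)^\beta)$ depends only on $\rho_l(x,y)$, I would set
\[
v(x,y):=\exp\!\big(\alpha'\,\rho_l(x,y)^\beta\big),\qquad \alpha':=(2^\beta-1)\,\alpha ,
\]
and note that $\alpha'\in(0,\alpha)$ precisely because $0<\beta<1$; put $\gamma:=\alpha-\alpha'>0$. First I would check that $v$ (and likewise $w$) is a weight in the sense of the earlier definition: symmetry and $v\ge 1$ are clear, $D(v)=1$, and moderateness holds with $D(C_0,v)\le e^{\alpha' C_0^\beta}$, using $|\rho_l(x,y)-\rho_l(\tilde x,\tilde y)|\le C_0$ together with $(a+C_0)^\beta\le a^\beta+C_0^\beta$.

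The conceptual key is \eqref{w1}, and it rests on the elementary fact that for all $a\ge b\ge 0$ and $0<\beta<1$,
\[
(a+b)^\beta-a^\beta\le (2^\beta-1)\,b^\beta ,
\]
which follows by writing the left-hand side as $b^\beta\big((u+1)^\beta-u^\beta\big)$ with $u=a/b\ge 1$ and observing that $u\mapsto(u+1)^\beta-u^\beta$ is decreasing on $[1,\infty)$. Granting this, fix $x,y,z$ and assume without loss of generality $\rho_l(x,z)\ge\rho_l(z,y)$. Then, by the triangle inequality and monotonicity of $s\mapsto e^{\alpha s^\beta}$,
\begin{align*}
w(x,y)&\le \exp\!\big(\alpha\,(\rho_l(x,z)+\rho_l(z,y))^\beta\big)\\
&\le \exp\!\big(\alpha\,\rho_l(x,z)^\beta+\alpha(2^\beta-1)\,\rho_l(z,y)^\beta\big)=w(x,z)\,v(z,y),
\end{align*}
so \eqref{w1} holds with $D=1$ (the summand $v(x,z)w(z,y)\ge1$ only helps, and the case $\rho_l(z,y)\ge\rho_l(x,z)$ is symmetric).

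For \eqref{w22} (recall $p=2$, $p'=2$) note that $(vw^{-1})(x,y)=\exp(-\gamma\rho_l(x,y)^\beta)$, and summing over shells while invoking property $P$ with exponent $\beta$ and coefficient $\gamma$ (so $|B(x,\tau)|\le C e^{\gamma\tau^\beta}$ uniformly in $x$) yields
\[
\sum_{y\in G}\exp(-2\gamma\rho_l(x,y)^\beta)\le \sum_{n\ge0}|B(x,n+1)|\,e^{-2\gamma n^\beta}\le C'\sum_{n\ge0}e^{-\gamma n^\beta}<\infty ,
\]
uniformly in $x$; the symmetric sum is identical since $v,w$ are symmetric. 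This is the only step where property $P$, rather than mere subexponential growth, is used. For \eqref{w2}, the same shell counting gives $a_{r'}(\tau)\le C_1 e^{c_1\tau^\beta}$ with $c_1=\alpha'+\gamma/r'>0$ (from $\sup_{B(x,\tau)}v\le e^{\alpha'\tau^\beta}$ and $|B(x,\tau)|^{1/r'}\le C e^{\gamma\tau^\beta/r'}$), while the tail of the convergent series above gives $b_{2}(\tau)\le C_2 e^{-c_2\tau^\beta}$ for some $c_2>0$. Then, given $t\ge1$, choosing $\tau$ with $\tau^\beta=(\ln t)/(c_1+c_2)$ (and letting $\tau\to0^+$ when $t=1$) balances the two contributions and yields
\[
\inf_{\tau>0}\big(a_{r'}(\tau)+b_{2}(\tau)\,t\big)\le C_1\,t^{c_1/(c_1+c_2)}+C_2\,t^{1-c_2/(c_1+c_2)}=(C_1+C_2)\,t^{\theta},\qquad \theta:=\frac{c_1}{c_1+c_2}\in(0,1),
\]
which is \eqref{w2}; since $r\in[1,\infty]$ enters only through $r'$ in the constant $c_1$, this shows $w$ is $(2,r)$-admissible for every $r$. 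I expect \eqref{w2} to be the main technical obstacle: one must carry the constants $c_1$ and, crucially, $c_2>0$ accurately through the shell and tail estimates and then perform the $\tau$-optimization, whereas \eqref{w1} collapses to the one-variable inequality above and \eqref{w22} is a routine convergent-series bound.
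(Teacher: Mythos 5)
Your proposal is correct and follows essentially the same route as the paper: the same companion weight $v(x,y)=\exp(\alpha(2^\beta-1)\rho_l(x,y)^\beta)$, the same subadditivity inequality $(a+b)^\beta\le a^\beta+(2^\beta-1)b^\beta$ for $a\ge b$ (the paper states it in the equivalent form $1\le s^\beta+(2^\beta-1)(1-s)^\beta$ for $s\in[\tfrac12,1]$), property $P$ for the convergence in \eqref{w22}, and the same balancing choice of $\tau$ giving $\theta=1/(3-2^\beta)$ in \eqref{w2}. The only differences are cosmetic (unit shells versus the paper's dyadic shells in the tail estimate, and an explicit monotonicity proof of the key one-variable inequality).
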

	
	\begin{proof}
		
		In order to prove $w$ is an admissible weight satisfying (\ref{w1})-(\ref{w2}), the proof is carried out in several steps.
		
		\textbf{Step 1.}  The weights $w$ and $v$ satisfy (\ref{w1}), i.e.,
		$$w(x,y)\le D\left ( w(x,z)v(z,y)+ v(x,z)w(z,y)\right)$$ for all $x,y,z\in G$ with $D=1$.
		
		\noindent We recall the form of weight $w$ and define the weight $v$ as follows
		\begin{equation}
			\label{adw1}
			w(x,y)=\exp(\alpha \rho_{l }(x,y)^{\beta})\quad \text{for~all}~ x,y\in G,
		\end{equation}
		\begin{equation}\label{adv1}
			v(x,y)=\exp(\alpha (2^{\beta }-1) \rho_{l}(x,y)^{\beta}) \quad \text{for~all}~ x,y\in G,
		\end{equation}
		with $\alpha\in (0,\infty),\beta \in (0,1)$.
		Note that the following inequality holds
		$$ 1\leq s^\beta +(2^\beta -1)(1-s)^\beta\quad\text{for~all}~\tfrac{1}{2}\leq s \leq 1.$$
		Let 
		$$
		s=\begin{cases}
			\frac{\rho_l(x,z)}{\rho_l(x,z)+\rho_l(z,y)}, &\text{ if } \rho_l(x,z)\geq \rho_l(z,y)\\
			\frac{\rho_l(z,y)}{\rho_l(x,z)+\rho_l(z,y)}, &\text{ if } \rho_l(x,z)< \rho_l(z,y)
		\end{cases}.
		$$
		If $ \rho_l(x,z)\geq \rho_l(z,y)$, we have
		\begin{align*}
			w(x,y)&=\exp(\alpha \rho_{l }(x,y)^{\beta})\le \exp(\alpha( \rho_{l }(x,z)+\rho_{l }(z,y))^{\beta }) \\
			&\leq\exp(\alpha (s^\beta+(2^\beta-1)(1-s)^\beta)( \rho_{l}(x,z)+\rho_{l}(z,y))^{\beta})\\
			&=\exp(\alpha\rho_l(x,z)^\beta +(2^\beta-1)\rho_l(z,y)^\beta)=w(x,z)v(z,y).
		\end{align*}
		Similarly, if $\rho_l(x,z)< \rho_l(z,y)$, we have
		$$w(x,y)\leq v(x,z)w(z,y)\quad\text{for~all}~x,y,z\in G.$$
		Hence,
		$$w(x,y)\leq D\left(w(x,z)v(z,y)+v(x,z)w(z,y)\right)$$
		for all $x,y,z\in G$ with $D=1$.
		Thus, the weights $w$ and $v$ satisfy (\ref{w1}).
		
		\textbf{Step2.} The weights $w$ and $v$ satisfy (\ref{w22}), i.e.,
		\begin{align*}
			\sup_{x\in G}\left \| vw^{-1}(x,\cdot ) \right \| _{2}+\sup_{y\in G}\left \| vw^{-1}(\cdot,y ) \right \| _{2}<\infty.
		\end{align*}
		Firstly, we estimate $\sup_{x\in G}\left \| vw^{-1}(x,\cdot ) \right \| _{2}$.
		
		Based on {\it property P} of the group $G$, we know that for any $0<\alpha'<\alpha(2-2^{\beta})$ and $0<\beta'<\beta<1$, there exists $C_{\alpha' ,\beta' }$ such that
		\[ \left | B(x,\tau) \right | \le C_{\alpha' ,\beta' }\exp(\alpha' \tau^{\beta' })
		\quad\text{for~ all}~  x\in G~\text{and
		}~\tau\ge1.\]
		By the definition of the weights $v$ and $w$ , we have
		\begin{align}\label{vw}
			vw^{-1}(x,y)=\exp(-\alpha (2-2^{\beta }) \rho_{l}(x,y)^{\beta}).
		\end{align}
		Since 
		\begin{gather}\label{2.7'}
			\sup_{x\in G}\left \| vw^{-1}\left ( x,\cdot \right )\chi _{B\left ( x,\tau  \right )\left ( \cdot  \right )  }  \right \|_{2}=\sup_{x\in G}(  (	vw^{-1}(x,y)    )^{2} \left | B(x,\tau ) \right |)^\frac{1}{2},
		\end{gather}
		and
		\begin{gather}\label{2.7}
			\sup_{x\in G}\left \| vw^{-1}\left ( x,\cdot \right )\chi _{X\setminus B\left ( x,\tau  \right )\left ( \cdot  \right )  }  \right \|_{2}=(\sum_{j=0}^{\infty } \sum _{2^{j}\tau \le\rho_l (x,y)< 2^{j+1}\tau} ( vw^{-1}(x,y)  )^{2})^\frac{1}{2}.
		\end{gather}
		It is worth noting that the above results are also true for $y$.
		
		\noindent Combing  (\ref{vw}),(\ref{2.7'}) and (\ref{2.7}), we have
		\begin{align*}
			&	\sup_{x\in G}\left \| vw^{-1}\left ( x,\cdot \right )\chi _{B\left ( x,\tau  \right )\left ( \cdot  \right )  }  \right \|_{2}= \sup_{x\in G}(\sum _{\rho_{l}(x,y)< \tau }\exp(-2\alpha (2-2^{\beta })\rho_{l }(x,y)^{\beta })^\frac{1}{2}\notag\\
			&\le C\sup_{x\in G}(\exp(-2\alpha (2-2^{\beta })\tau ^{\beta})\left| B(x,\tau)\right|)^\frac{1}{2}\notag\\
			&\le C\sup_{x\in G}(\exp(-2\alpha (2-2^{\beta })\tau ^{\beta}+\alpha'\tau^{\beta'}))^\frac{1}{2}\le C
		\end{align*}
		and
		\begin{align}\label{sub-2.52}
			&\sup_{x\in G}\left \| (vw^{-1})\left ( x,\cdot \right )\chi _{X\setminus B\left ( x,\tau  \right )\left ( \cdot  \right )  }  \right \|_{2}\leq\Big(\sum_{j=0}^{\infty} (\exp (-2\alpha (2-2^{\beta})(2^{j}\tau)^{\beta})) \left| B(x,2^{j+1}\tau)\right|\Big)^\frac{1}{2} \notag \\
			&\leq C\Big(\sum_{j=0}^{\infty } \exp(-2\alpha(2-2^{\beta}) 2^{j\beta}\tau^{\beta }+\alpha'2^{(j+1)\beta'}\tau ^{\beta'} )\Big)^\frac{1}{2}\notag \\
			&\leq C\Bigl(\sum_{j=0}^{\infty }\exp( -2(\alpha(2-2^{\beta}) -\alpha')2^{j\beta}\tau^{\beta} )\Bigr)^{\frac{1}{2}}\notag \\
			&=C \Bigl(\sum_{j=0}^{\infty }\exp(-\alpha(2-2^\beta)\tau^\beta[2(1-\tfrac{\alpha'}{\alpha(2-2^{\beta})})]2^{j\beta} \Bigr)^{\frac{1}{2}} \notag\\
			&=C \Bigl(\sum_{j=0}^\infty q^{2\bigl(1-\tfrac{\alpha'}{\alpha(2-2^{\beta})}\bigr)2^{j\beta}}\Bigr)^{\frac{1}{2}}\leq C  (\sum_{j=0}^\infty q^{2^{j+1}})^{\frac{1}{2}} \notag \\
			&\leq C (\sum_{j=1}^{\infty}q^{2j}  )^{\frac{1}{2}}\leq C\sqrt{\frac{q^2}{1-q^2}}\leq C'q ,
		\end{align}
		where $q=\exp(-\alpha(2-2^{\beta})\tau^\beta)<1$, and note $\tau\geq 1$, the $\frac{1}{\sqrt{1-q^2}}$ indeed has upper bound.
		
		Then, we obtain 
		\begin{align*}
			\sup_{x\in G}\left \| vw^{-1}(x,\cdot ) \right\| _{2}\le (\ref{2.7'})+(\ref{2.7})
			&\le C+C'\sum_{j=0}^{\infty } \exp(-\alpha(2-2^{\beta}) \tau^{\beta } )<\infty.
		\end{align*}
		Similarly, we  obtain 
		\begin{equation*}
			\sup_{y\in G}\left \| vw^{-1}(\cdot,y ) \right \| _{2}<\infty.
		\end{equation*}
		Thus, we have the weights $w$ and $v$ satisfy (\ref{w22}).
		
		\textbf{Step 3.}  The weights $w$ and $v$ satisfy (\ref{w2}), i.e.,
		\begin{align*}
			\inf_{\tau\ge 1} a_{r'}(\tau ) +b_{p'}(\tau ) \cdot t\le Ct^{\theta}\quad \text{for~all} ~t\ge 1~\text{and}~\theta\in (\frac{1}{3-2^{\beta}},1).
		\end{align*}
		Firstly, we get the following estimate
		\begin{align}\label{sub2.5}
			\sup_{x\in G}\left \| v\left ( x,\cdot \right )\chi _{B\left ( x,\tau  \right )\left ( \cdot  \right )  }  \right \|_{r'} 
			&\le\sup_{x\in G}( \exp(r'\alpha (2^{\beta }-1) \rho_{l }(x,y)^{\beta})\left| B(x,\tau)\right|) ^\frac{1}{r'}\notag\\
			&\le C\exp(\alpha (2^{\beta }-1)\tau^\beta+\alpha'\tau^{\beta'}/r')\notag\\
			&\le C\exp(\alpha (2^{\beta }-1)\tau^\beta+\alpha'\tau^{\beta'})\le C\exp(\alpha \tau^\beta),
		\end{align}
		where $r'\ge 1$. 
		
		\noindent Then, by applying the inequalities (\ref{sub-2.52}) and (\ref{sub2.5}), we derive that
		\begin{equation}
			\label{e:key_exp}
			\inf_{\tau\ge 1} a_{r'}(\tau ) +b_{p'}(\tau ) t
			\le C\inf_{\tau\ge 1} [\exp(\alpha \tau^\beta) + \exp(-\alpha(2-2^{\beta}) \tau^{\beta } )\cdot t].
		\end{equation}
		We assert that there is a function $f(t)$ satisfies that		
		\begin{align}
			\exp(\alpha(1+f(t))^\beta)&\le Ct^{1/(3-2^\beta)}\label{e3.15}\\
			\exp(-\alpha(2-2^\beta)(1+f(t))^\beta)\cdot t&\le Ct^{1/(3-2^\beta)}\label{e3.16}.
		\end{align}
		Indeed, 
		\begin{align*}
			(\ref{e3.15}) &\iff \exp(\alpha(1+f(t))^\beta)\leq \exp(1/(3-2^\beta)\ln t+\ln C)\\
			&\iff \alpha(1+f(t))^\beta \leq \frac{\ln t}{3-2^\beta}+\ln C,
		\end{align*}
		and 
		\begin{align*}
			(\ref{e3.16})&\iff t^{\frac{2-2^\beta}{3-2^\beta}}\leq C\left[ \exp(\alpha(1+f(t))^\beta)\right]^{2-2^\beta}\\
			&\iff \exp\left(\frac{1}{3-2^\beta}\ln t\right)\leq C^{\frac{1}{2-2^\beta}}\exp(\alpha(1+f(t))^\beta)\\
			&\iff \frac{1}{3-2^\beta}\ln t\leq \alpha(1 + f(t))^\beta +\frac{1}{2-2^\beta}\ln C.
		\end{align*}
		Setting $\frac{1}{3-2^\beta}\ln t= \alpha(1 + f(t))^\beta$, i.e., 
		$$f(t)= \left(\frac{\ln t}{\alpha(3-2^\beta)}\right)^{\frac{1}{\beta}}-1,$$
		the desired inequalities (\ref{e3.15}) and (\ref{e3.16}) hold.
		Let $\tau=1+f(t)$ in (\ref{e:key_exp}), we get 
		\[\inf_{\tau\ge 1} a_{r'}(\tau ) +b_{p'}(\tau ) \cdot t\le Ct^{ 1/(3-2^{\beta })}\le Ct^{\theta}\quad \text{for~all}~t\ge 1~,\theta\in (\frac{1}{3-2^{\beta}},1),\]
		which means the weights $w$ and $v$ satisfy (\ref{w2}).
	\end{proof}
	\begin{theorem}\label{banach-2}
		Assume that $G$ satisfies property $P$ with a proper length function $l$, $R_{\alpha,\beta }(G)$ is a Banach algebra with norm $\left \| \cdot \right \|_{\alpha,\beta }$.    
	\end{theorem}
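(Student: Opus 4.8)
I would establish in turn that $\|\cdot\|_{\alpha,\beta}$ is a norm, that $R_{\alpha,\beta}(G)$ is complete for it, and that the matrix product on $R_{\alpha,\beta}(G)$ is controlled by $\|\cdot\|_{\alpha,\beta}$ up to a multiplicative constant; after rescaling the norm this presents $R_{\alpha,\beta}(G)$ as a unital Banach algebra. Throughout, for a kernel $T=(t(x,y))_{x,y\in G}$ I write $\tilde t$ for its dominating vector and $w(z)=\exp(\alpha l(z)^{\beta})$, so that $\|T\|_{\alpha,\beta}=\|\tilde t\,w\|_{2}$.

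\textbf{Norm and completeness.} The norm axioms are immediate: homogeneity is clear, the triangle inequality follows from the pointwise bound $\widetilde{S+T}\le\tilde s+\tilde t$ together with the triangle inequality in $\ell^{2}(G)$, and $\|T\|_{\alpha,\beta}=0$ forces $\tilde t\equiv 0$, hence $T=0$, because $w\ge 1$. For completeness I would use that $\widetilde T(z)\le\|T\|_{\alpha,\beta}$ for every $z$ (again $w\ge 1$), so a $\|\cdot\|_{\alpha,\beta}$-Cauchy sequence $(T_{n})$ converges entrywise to some kernel $T$; letting $m\to\infty$ in $|t_{n}(x,y)-t_{m}(x,y)|\le\widetilde{T_{n}-T_{m}}(y^{-1}x)$ gives $\widetilde{T_{n}-T}(z)\le\liminf_{m}\widetilde{T_{n}-T_{m}}(z)$ for each $z$, and Fatou's lemma on $G$ then yields $\|T_{n}-T\|_{\alpha,\beta}^{2}\le\liminf_{m}\|T_{n}-T_{m}\|_{\alpha,\beta}^{2}\to 0$, so $T\in R_{\alpha,\beta}(G)$ and $T_{n}\to T$.

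\textbf{The product is well defined.} As in the proof of Theorem~\ref{thm4.2} (Step~2), property $P$ gives $\sum_{g\in G}\exp(-\delta l(g)^{\beta})<\infty$ for every $\delta>0$ (decompose $G$ into the shells $\{j\le l(g)<j+1\}$, whose cardinality is at most $|B(e,j+1)|\le C_{\alpha',\beta'}\exp(\alpha'(j+1)^{\beta'})$ with $\beta'\in(0,\beta)$, so that $\alpha'(j+1)^{\beta'}-\delta j^{\beta}\to-\infty$). In particular $w^{-1}\in\ell^{2}(G)$, so $\tilde t=(\tilde t\,w)\,w^{-1}\in\ell^{1}(G)$ by Cauchy--Schwarz; this makes $(ST)(x,y)=\sum_{u}s(x,u)t(u,y)$ absolutely convergent (and, via the Schur test, $R_{\alpha,\beta}(G)\subseteq\mathcal B(\ell^{2}(G))$), with dominating vector obeying $\widetilde{ST}(z)\le\sum_{a\in G}\tilde s(a)\,\tilde t(za^{-1})$ for $z=y^{-1}x$. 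Since $l$ is a length function we have $l(z)\le l(a)+l(za^{-1})$, and the subadditivity of $t\mapsto t^{\beta}$ gives $w(z)\le w(a)\,w(za^{-1})$; but this crude sub-multiplicativity only yields $\|ST\|_{\alpha,\beta}\le\|(\tilde s\,w)*(\tilde t\,w)\|_{2}$, which is useless since $\ell^{2}(G)$ is not a convolution algebra.

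\textbf{Sub-multiplicativity — the main point.} The remedy, which I expect to be the crux of the argument, is to extract an exponential gain from the strict concavity of $t\mapsto t^{\beta}$: using $(u+v)^{\beta}\le\max(u,v)^{\beta}+\beta\min(u,v)^{\beta}$ for $u,v\ge 0$ one obtains
\[\frac{w(z)}{w(a)\,w(za^{-1})}\le\exp\bigl(-\alpha(1-\beta)\min(l(a)^{\beta},l(za^{-1})^{\beta})\bigr)\le e^{-\alpha(1-\beta)l(a)^{\beta}}+e^{-\alpha(1-\beta)l(za^{-1})^{\beta}}.\]
Substituting this into $\widetilde{ST}(z)\,w(z)\le\sum_{a}\tilde s(a)\,\tilde t(za^{-1})\,w(z)$ splits $\|ST\|_{\alpha,\beta}$ into two convolution terms, in each of which exactly one factor carries the extra damping $e^{-\alpha(1-\beta)l^{\beta}}$; by Cauchy--Schwarz and the summability above, that damped factor lies in $\ell^{1}(G)$ with norm at most $\bigl(\sum_{g}e^{-2\alpha(1-\beta)l(g)^{\beta}}\bigr)^{1/2}$ times $\|S\|_{\alpha,\beta}$ (respectively $\|T\|_{\alpha,\beta}$), while its partner retains $\ell^{2}$-norm at most $\|T\|_{\alpha,\beta}$ (respectively $\|S\|_{\alpha,\beta}$). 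Young's inequality $\|F*H\|_{2}\le\|F\|_{1}\|H\|_{2}$ then gives $\|ST\|_{\alpha,\beta}\le C\|S\|_{\alpha,\beta}\|T\|_{\alpha,\beta}$ with $C=2\bigl(\sum_{g}e^{-2\alpha(1-\beta)l(g)^{\beta}}\bigr)^{1/2}<\infty$. Replacing $\|\cdot\|_{\alpha,\beta}$ by the equivalent norm $C\|\cdot\|_{\alpha,\beta}$ makes the product sub-multiplicative, and since the identity operator satisfies $\|I\|_{\alpha,\beta}=w(e)=1$, $R_{\alpha,\beta}(G)$ is a unital Banach algebra. The only non-routine ingredient is precisely this last step: mere sub-multiplicativity of $w$ does not suffice, and one must mine the gain $e^{-\alpha(1-\beta)l^{\beta}}$ from the concavity of $t\mapsto t^{\beta}$ and combine it with property $P$ to land one convolution factor in $\ell^{1}(G)$ before invoking Young's inequality.
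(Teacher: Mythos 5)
Your proposal is correct and follows essentially the same route as the paper: completeness via entrywise limits of a Cauchy sequence (the paper exhausts $G$ by finite subsets where you use Fatou), and sub-multiplicativity by splitting $w(z)$ into two products each carrying one exponentially damped factor, placing that factor in $\ell^1(G)$ via Cauchy--Schwarz and property $P$, and finishing with Young's inequality $\ell^1*\ell^2\subseteq\ell^2$. The only cosmetic difference is that you extract the damping $e^{-\alpha(1-\beta)l^{\beta}}$ directly from the concavity of $t\mapsto t^{\beta}$, whereas the paper invokes the $(2,r)$-admissibility of $w$ from Theorem~\ref{thm4.2}, i.e.\ $w(z)\le D\bigl(w(a)v(a^{-1}z)+v(a)w(a^{-1}z)\bigr)$ with $v=\exp(\alpha(2^{\beta}-1)l^{\beta})$ and damping $vw^{-1}=\exp(-\alpha(2-2^{\beta})l^{\beta})$ --- the same mechanism with a different constant.
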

	\begin{proof}
		
		The verification that $R_{\alpha,\beta }(G)$ forms a Banach algebra is carried out in two parts: completeness and multiplicative structure.
		
		\textbf{Claim 1.} $R_{\alpha,\beta }(G)$ is a Banach space. 
		
		We rewrite the definition of $\left \| \cdot  \right \| _{\alpha,\beta }$ as follows in the following paper  $$ \tilde{t}(z)=\sup_{ y^{-1}x=z } 
		\left | t(x,y) \right |~\text{and}~ w(z)=\exp(\alpha \rho_{l }(z)^{\beta}.$$
		The norm $\left \| \cdot  \right \|_{\alpha,\beta }$ in the following paper is defined as $\left \| T \right \|_{\alpha,\beta} 
		= [\sum _{z\in G} ((\tilde{t}  w)(z))^{2} ] ^\frac{1}{2}.$
		It is obvious that  $R_{\alpha,\beta }(G)$ 
		is a normed space with the norm $\left \| \cdot \right \|_{\alpha,\beta }$.\\
		Let $\left \{ T_{n} \right \}_{n=1}^{\infty} $ be the Cauchy sequence in $R_{\alpha,\beta }(G)$ , i.e.,$\forall s>0,\forall \varepsilon >0,\exists M>0$, \text {s.t.} $\forall m,n\ge M,$ one has $\left \| T_{m}-T_{n} \right \| _{\alpha,\beta}<\varepsilon $, that is
		\begin{align}\label{complete}
			[ \sum_{z\in G} (\sup_{y^{-1}x=z}\left | t_{m}(x,y)-t_{n}(x,y) \right | )^{2}\exp(2\alpha \rho_{l }(z)^{\beta} ]^{\frac{1}{2} }<\varepsilon,
		\end{align}
		which indicates that
		\[\sup_{y^{-1}x=z}\left | t_{m}(x,y)-t_{n}(x,y) \right | <\varepsilon\quad \text{for~all}~n,m\ge M.\]
		Therefore, the sequence $\left \{ t_n(x,y) \right \}_{n=1}^{\infty} $ is a Cauchy sequence and $\lim_{n \to \infty} t_{n}(x,y)=t_{0}(x,y)$. 
		For simplify, we denote $T_{0} = [t_{0}(x, y)]_{x,y\in G}$, where $t_{0}(x,y)=\lim_{n \to \infty} t_{n}(x,y)$.
		For any finite subset $F$ of $G$, by using (\ref{complete}), we have
		\[ \sum_{z\in F} (\sup_{y^{-1}x=z}\left | t_{m}(x,y)-t_{n}(x,y) \right | )^{2}\exp(2\alpha \rho_{l }(z)^{\beta}<\varepsilon^2\quad \text{for~all} ~n,m\ge M.\]
		Letting $n\to \infty $, and then letting $F\to G$, we get 
		\[ \sum_{z\in G} (\sup_{y^{-1}x=z}\left | t_{m}(x,y)-t_{0}(x,y) \right | )^{2}\exp(2\alpha \rho_{l }(z)^{\beta}<\varepsilon^2.\]
		i.e.,$\left \| T_{m}-T_{0} \right \| _{\alpha,\beta }<\varepsilon$.
		We have $T_{m}\to T_{0}$ and $T_m-T_0\in R_{\alpha,\beta }(G)$, which means that $T_0\in R_{\alpha,\beta }(G)$.
		Thus, the completeness holds, and $R_{\alpha,\beta }(G)$ is a Banach space.
		
		\textbf{Claim 2.} $R_{\alpha,\beta }(G)$ is a Banach algebra.
		
		It is clear that $R_{\alpha,\beta }(G)$ is an algebra. We just show that $\left \| AB \right \|_{\alpha,\beta }\le \left \| A \right \|_{\alpha,\beta }\left \| B \right \|_{\alpha,\beta }$.
		Let $A,B\in R_{\alpha,\beta }(G) $.
		We denote  $A=(a(x,y))_{x,y\in G}$, $B=(b(x,y))_{x,y\in G}\in R_{\alpha,\beta }(G)$, and write $AB=(c(x,y))_{x,y\in G}$.
		Taking $z = y^{-1}x$ for any $  x, y \in  G$, 
		we have 
		\begin{align*}
			\left \| A\right \| _{\alpha,\beta} &%
			= [\sum _{z\in G} 
			(\tilde {a}(z)w(z))^{2}  ]^\frac{1}{2}.
		\end{align*}
		Also we define $\left \| B\right \| _{1 ,v }= {\sum_{z\in G}  \tilde {b}\left (z \right )v(z)}$.
		We can write $\left \| AB \right \|_{\alpha,\beta }= \|  \tilde {c}w \|_{2}$, where
		\begin{align*}
			\tilde{c}(z)&=\sup_{_{ y^{-1}x=z } }  |c(z,I)  |=\sup_{z\in G } |\sum_{x\in G}a(z,x)b(x,I)  |\le \sum_{x\in G}\tilde{a}(x^{-1}z)\tilde{b}(x),
		\end{align*}
		where $I$ is the identity element of the discrete group $G$.\\
		Since $w$ is an admissible weight, we obtain
		\begin{align*}
			w(z,I)&\le D(w(z,x)v(x,I)+v(z,x)w(x,I))=D\left(w(x^{-1}z)v(x)+v(x^{-1}z)w(x)\right),
		\end{align*}
		where $D\in (0,\infty)$.
		
		Thus, we have the following eatimate
		\begin{align*}
			\left \| AB \right \| _{\alpha,\beta}^{2}
			&\le \sum_{z\in G } |\sum_{x\in G} \tilde {a}(x^{-1}z)\tilde {b}(x)w(z)  |^2\\
			&\le D^2\sum_{z\in G } |\sum_{x\in G} \tilde {a}(x^{-1}z)\tilde {b}(x)(w(x^{-1}z)v(x)+v(x^{-1}z)w(x))  |^2\\
			&\le 2D^2 (  |\sum_{z\in G }\left ( \tilde {a}w \right ) \ast   \tilde {b}v  (z) |^2+\ |\sum_{z\in G }\left ( \tilde {a}v \right ) \ast   \tilde {b}w (z) |^2 ).
		\end{align*}
		Let $f=\tilde{a}w$, $g=\tilde{b}v$. We know $f,g\in \ell^2(G)$. It follows from the Young's Inequality that 
		\begin{align}\label{3.3}
			\left \| AB \right \| _{\alpha,\beta}^{2}&\leq 2D^2(\ \| \tilde{b}v\ \|_1^2 \left \| \tilde{a}w \right \|_2^2 + \left \| \tilde{a}v \right \|_1^2 \ \| \tilde{b}w \ \|_2^2)\notag \\
			&= 2D^2( \left \| B\right \|_{1,v}^{2}\left \| A \right \|_{\alpha,\beta}^{2} +\left \| A \right \|_{1,v}^{2}\left \| B \right \|_{\alpha,\beta}^{2} ) . 
		\end{align}
		By utilizing the Cauchy-Schwarz Inequality and (\ref{w22}), we have
		\begin{align}\label{estm3}
			\left \| A \right \|_{1,v}
			&\le  [\sum_{z\in G} (\tilde{a}(z)w(z))^{2} ]^{\frac{1}{2}}\cdot[\sum_{z\in G} \left(vw^{-1}(z) \right)^2 ]^{\frac{1}{2}}\le D \left \| A \right \| _{\alpha,\beta }.
		\end{align}
		Combining the estimates (\ref{3.3}) and (\ref{estm3})  leads to
		\begin{align*}
			&\left \| AB \right \|_{\alpha,\beta }^2 
			\le D\left \| A \right \|_{\alpha,\beta}^2\left \| B \right \|_{\alpha,\beta }^2,
		\end{align*}
		which means $\left \| AB \right \|_{\alpha,\beta }\le D\left \| A \right \|_{\alpha,\beta } \left \| B \right \|_{\alpha,\beta } $.
		Let $\left \| \cdot  \right \|'=D \left \| \cdot  \right \| _{\alpha,\beta }$, we have  $\left \| AB  \right \|'\le \left \| A \right \|'\left \| B  \right \|'$. Hence, $R_{\alpha,\beta }(G)$  is a Banach algebra with norm $\left \| \cdot \right \|_{\alpha,\beta }$.
		
	\end{proof}
	\begin{theorem}\label{subcontain}
		Let $\left ( G,\rho_{l }  \right )$ be a discrete metric space. If $G$ satisfies property P, then $R^{\infty }(G) \subseteq C_{u}^{*}(G)$.      
	\end{theorem}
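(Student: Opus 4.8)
The plan is to prove the inclusion directly, by exhibiting each $T\in R^\infty(G)$ as an operator-norm limit of bounded finitely propagated matrices, so that $T\in\overline{C_u[G]}=C_u^*(G)$; the admissibility of $w$ established in Theorem~\ref{thm4.2} will not be needed, only a counting estimate coming from property~$P$. Fix $T=[t(x,y)]\in R^\infty(G)$, so $T\in R_{\alpha,\beta}(G)$ for some $\alpha>0$, $0<\beta<1$, and with $w(z)=\exp(\alpha l(z)^\beta)$ and $\tilde t(z)=\sup_{y^{-1}x=z}|t(x,y)|$ we have $\|\tilde t\,w\|_2=\|T\|_{\alpha,\beta}<\infty$. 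For each $z\in G$ let $T_z$ be the matrix obtained from $T$ by retaining only the entries with $y^{-1}x=z$ and zeroing the rest; then $(T_z\xi)(x)=t(x,xz^{-1})\,\xi(xz^{-1})$, a weighted translation, so $T_z\in\mathcal B(\ell^2(G))$ with $\|T_z\|\le\tilde t(z)$, and $T_z$ is supported on pairs at distance exactly $l(z)$, hence $T_z$ and every finite sum $\sum_{z\in F}T_z$ belong to $C_u[G]$. Since $\big(\sum_{z\in G}T_z\big)(x,y)=t(x,y)$, everything reduces to the absolute-summability estimate $\sum_{z\in G}\|T_z\|\le\sum_{z\in G}\tilde t(z)<\infty$: granting this, the partial sums $\sum_{z\in F}T_z$ form a norm-Cauchy net of elements of $C_u[G]$ converging to a bounded operator that must equal $T$, so $T\in C_u^*(G)$.

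To bound $\sum_{z}\tilde t(z)$ I would factor out the weight and apply the Cauchy--Schwarz inequality on $\ell^2(G)$:
\[
\sum_{z\in G}\tilde t(z)=\sum_{z\in G}\big(\tilde t(z)w(z)\big)w(z)^{-1}\le\|\tilde t\,w\|_2\Big(\sum_{z\in G}w(z)^{-2}\Big)^{1/2}=\|T\|_{\alpha,\beta}\Big(\sum_{z\in G}\exp(-2\alpha l(z)^\beta)\Big)^{1/2},
\]
so that the whole statement reduces to convergence of the single series $\sum_{z\in G}\exp(-2\alpha l(z)^\beta)$.

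For this I would split $G$ into spheres $S_n=\{z\in G:n\le l(z)<n+1\}$, $n\ge0$, so that $|S_n|\le|B(I,n+1)|$. Applying property~$P$ with the same parameters $\alpha,\beta$ gives $|S_n|\le C_{\alpha,\beta}\exp\!\big(\alpha(n+1)^\beta\big)$, and for $n\ge1$ we have $(n+1)^\beta\le(2n)^\beta=2^\beta n^\beta$, whence
\[
\sum_{z\in G}\exp(-2\alpha l(z)^\beta)\le|S_0|+C_{\alpha,\beta}\sum_{n=1}^\infty\exp\!\big((2^\beta-2)\alpha\,n^\beta\big)<\infty,
\]
since $0<\beta<1$ makes $2^\beta<2$, so $(2^\beta-2)\alpha<0$ and the series converges by comparison with $\sum_n e^{-cn^\beta}$, $c>0$. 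This gives $\sum_z\tilde t(z)<\infty$, hence $T\in C_u^*(G)$. The only step with genuine content is this last convergence estimate, and the point to be careful about is the parameter choice in property~$P$: one must use exponent exactly $\beta$, since any $\beta'>\beta$ would let the ball sizes outgrow the decay $\exp(-2\alpha\tau^\beta)$; with exponent $\beta$ the surplus $2-2^\beta>0$ is precisely what makes the sum finite.
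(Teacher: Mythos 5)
Your proof is correct. It rests on exactly the same quantitative input as the paper's, namely the convergence of $\sum_{z\in G}\exp(-2\alpha\, l(z)^{\beta})$, obtained from property $P$ by summing over spheres and using $2^{\beta}<2$; but it is organized differently. The paper first proves a Schur-type bound $\|\phi\|_{\mathcal B(\ell^2)}\le C\|\phi\|_{\alpha,\beta}$ by Cauchy--Schwarz against the weight, and then truncates by propagation, showing $\|\phi-\phi_n\|_{\alpha,\beta}\to 0$ and transferring this to operator norm via the Schur bound. You instead decompose $T=\sum_z T_z$ into weighted translations along the diagonals $y^{-1}x=z$, each of which is finitely propagated with $\|T_z\|\le\tilde t(z)$, and prove the stronger $\ell^1$ statement $\sum_z\tilde t(z)\le\|T\|_{\alpha,\beta}\,\|w^{-1}\|_2<\infty$, which yields absolute convergence in $\mathcal B(\ell^2(G))$ of a net of elements of $C_u[G]$. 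Your route is arguably cleaner: it exhibits $R_{\alpha,\beta}(G)$ inside the Wiener-type algebra of matrices with summable dominating vector, from which both boundedness and approximability follow at once, whereas the paper needs two separate Cauchy--Schwarz computations (one for boundedness, one for the tail of the $\|\cdot\|_{\alpha,\beta}$ norm). One small correction to your closing remark: it is not true that one \emph{must} apply property $P$ with exponent exactly $\beta$; any $\beta'\le\beta$ works (the paper itself takes $\beta'<\beta$, in which case the coefficient $\alpha'$ is irrelevant), while with $\beta'=\beta$ one relies, as you do, on the strict inequality $2^{\beta}\alpha<2\alpha$. What fails is only $\beta'>\beta$.
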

	\begin{proof}
		For any $\xi\in \ell^{2}(G) $ and $\phi \in R^{\infty }(G)$, we have the following estimation
		\begin{align*}
			\begin{split}
				\left \| \phi \xi  \right \|^{2} 
				&\le  \sum _{x} \sum_{y}\left |\phi (x,y) \right | ^{2}\exp(2\alpha l(y^{-1}x)^{\beta }) \sum_{y} \exp(-2\alpha l(y^{-1}x)^{\beta })\left | \xi (y) \right | ^{2}\\
				&\le \sup _{x} \sum_{y}\left |\phi (x,y) \right | ^{2}\exp(2\alpha l(y^{-1}x)^{\beta })\sum_{x} \sum_{y} \exp(-2\alpha l(y^{-1}x)^{\beta })\left | \xi (y) \right | ^{2}.
			\end{split}
		\end{align*}
		For the term $\sum_{x} \sum_{y} \exp(-2\alpha l(y^{-1}x)^{\beta })\left | \xi (y) \right | ^{2}$, we obtain the following inequality, for some $0<\alpha^{'}<\alpha$ and $0<\beta^{'}<\beta<1 $,
		\begin{align*}
			\begin{split}
				&\sum_{x} \sum_{y} \exp(-2\alpha l(y^{-1}x)^{\beta })\left | \xi (y) \right | ^{2}
				\le \left \| \xi  \right \|^{2}\sum_{n=0}^{\infty } \sum _{n\le l(y^{-1}x)<n+1}\exp(-2\alpha l(y^{-1}x)^{\beta })\\
				&\le \left \| \xi  \right \|^{2}\sum _{n=0}^{\infty}\exp(-2\alpha n^{\beta })\left | B(e,n+1) \right |  
				\le C\left \| \xi  \right \|^{2} \sum _{n=0}^{\infty} \exp(-2\alpha n^{\beta })\exp(\alpha^{'} (n+1)^{\beta^{'} })\\
				&\le C\left \| \xi  \right \|^{2}[
				C+\sum _{n=1}^{\infty} \exp(-\alpha(2n^{\beta}-(n+1)^{\beta}))]\le C  \left \| \xi  \right \| ^{2},
			\end{split}
		\end{align*}
		where $2n^{\beta}-(n+1)^{\beta}>0$  for $n\ge 1$.\\
		Then, we derive 
		\begin{align*}
			\left \| \phi \xi  \right \|^{2}  	&\le C\left \| \xi  \right \|^{2}[\sup _{x} \sum_{y}\left |\phi (x,y) \right | ^{2}\exp(2\alpha l(y^{-1}x)^{\beta })]\\
			&\le C\left \| \xi  \right \|^{2} [\sum_{z\in G} \sup_{\left \{ x,y\in G,y^{-1}x=z \right \} }\left |\phi (x,y) \right | ^{2}\exp(2\alpha l(z)^{\beta })]\\
			&\le C \left \| \phi  \right \| _{\alpha ,\beta }^{2} \left \| \xi  \right \| ^{2}.
		\end{align*}
		Hence, we get $\left \| \phi   \right \| \le  C \left \| \phi  \right \| _{\alpha ,\beta }$. 
		By virtue of the boundedness of $\phi$
		, we set 
		\begin{equation*}
			\phi _{n}(x,y)=\left\{
			\begin{aligned}
				\phi(x,y) & , & \text {if}~ l(y^{-1}x)\le n;\\
				0 & , & otherwise.
			\end{aligned}
			\right.
		\end{equation*}
		Then , we get 
		\begin{align*}
			\left \| \phi -\phi_{n} \right \| _{\alpha,\beta }^{2}
			&\le\left \| \phi  \right \| _{2\alpha+1,\beta}^{2}\sum_{k=n+1}^{\infty } \sum _{k\le l(z)<k+1}\exp(-2(\alpha+1) l(z)^{\beta })\\
			&\le\left \| \phi  \right \| _{2\alpha+1,\beta}^{2}
			\sum_{k=n+1}^{\infty }  \left | B(e,k+1) \right |  \exp(-2(\alpha+1) k^{\beta })\\
			&\le C\left \| \phi  \right \| _{2\alpha+1,\beta}^{2}\sum_{k=n+1}^{\infty } \exp(-\alpha(2k^{\beta}-(k+1)^{\beta})-2k^{\beta})\\
			&\le C\left \| \phi  \right \| _{2\alpha+1,\beta}^{2}\sum_{k=n+1}^{\infty } \exp(-2k^{\beta })<\varepsilon.
		\end{align*}
		Therefore, we conclude
		$\left \| \phi -\phi_{n} \right \|<\varepsilon,$ and $R^{\infty }(G)\subseteq C_{u}^{*}(G)$.
	\end{proof}
	We provide the following lemma which is important to prove $R^{\infty }(G)$ is a spectral invariant subalgebra in Theorem \ref{spectral subgrow}.
	\begin{lemma}
		Taking $A=(a(x,y))_{x,y\in G}\in R_{\alpha,\beta}(G) $, let $ v(z)=v(z,I_{0} )$, $ w_{\left \{ x,y\in G,y^{-1}x=z \right \} }\\(x,y)=w(z,I_{0} )$. Then under the assumptions of the weights $w$ and $v$, we have
		\begin{align}\label{lemmamax}
			\left \| A \right \| _{\mathcal{B}^{2} }&\le \max (\sup_{x\in G}\sum _{y\in G}\left | a(x,y) \right |, \sup_{y\in G}\sum _{x\in G}\left | a(x,y) \right |)\notag\\
			&\leq  \left \| A \right \|_{\alpha,\beta}\left \| w^{-1}  \right \|_{2}\le C\left \| A \right \|_{\alpha,\beta}\left \| vw^{-1}  \right \|_{2},~ C>0.
		\end{align}
	\end{lemma}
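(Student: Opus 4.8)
The plan is to prove the three inequalities in \eqref{lemmamax} by combining the classical Schur test with the translation‐invariance of the matrix estimate and a single Cauchy--Schwarz splitting of the weight, and finally to observe that the constant $C$ can be taken to be $1$. Throughout I use $z=y^{-1}x$ and the dominating vector $\tilde a(z)=\sup_{y^{-1}x=z}|a(x,y)|$.

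First, for the leftmost inequality I would invoke the Schur--Holmgren test: for any operator $A$ on $\ell^{2}(G)$ with matrix $(a(x,y))_{x,y\in G}$,
\[
\left\|A\right\|_{\mathcal B(\ell^{2}(G))}\le \sqrt{R\cdot C},\qquad
R:=\sup_{x\in G}\sum_{y\in G}|a(x,y)|,\quad C:=\sup_{y\in G}\sum_{x\in G}|a(x,y)|,
\]
which follows in two lines by applying Cauchy--Schwarz to $\langle A\xi,\eta\rangle$ in the factorization $|a(x,y)|=|a(x,y)|^{1/2}\cdot|a(x,y)|^{1/2}$. Since $\sqrt{RC}\le\max(R,C)$, this gives the first bound in \eqref{lemmamax}.

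Next I would estimate both $R$ and $C$ by the single quantity $\sum_{z\in G}\tilde a(z)$. Fixing $x$ and using $|a(x,y)|\le \tilde a(y^{-1}x)$ together with the fact that $y\mapsto y^{-1}x$ is a bijection of $G$, I obtain $\sum_{y}|a(x,y)|\le \sum_{y}\tilde a(y^{-1}x)=\sum_{z\in G}\tilde a(z)$, a bound independent of $x$; the column sums are handled identically via the bijection $x\mapsto y^{-1}x$. Hence $\max(R,C)\le \sum_{z\in G}\tilde a(z)$. Then I split $\tilde a(z)=\bigl(\tilde a(z)w(z)\bigr)\cdot w(z)^{-1}$ and apply Cauchy--Schwarz on $\ell^{2}(G)$:
\[
\sum_{z\in G}\tilde a(z)\le\Big(\sum_{z\in G}(\tilde a(z)w(z))^{2}\Big)^{1/2}\Big(\sum_{z\in G}w(z)^{-2}\Big)^{1/2}
=\left\|A\right\|_{\alpha,\beta}\left\|w^{-1}\right\|_{2},
\]
which is the middle inequality of \eqref{lemmamax}. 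Here $\left\|w^{-1}\right\|_{2}<\infty$ because $\sum_{z\in G}\exp(-2\alpha l(z)^{\beta})$ converges: grouping $z$ by $n\le l(z)<n+1$ and using property $P$ (with $0<\beta'<\beta$) gives a summable series, exactly as in the proof of Theorem~\ref{subcontain}.

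Finally, since every weight satisfies $v\ge 1$ pointwise, we have $w(z)^{-1}\le v(z)w(z)^{-1}$ for all $z\in G$, hence $\left\|w^{-1}\right\|_{2}\le\left\|vw^{-1}\right\|_{2}$, and the last inequality in \eqref{lemmamax} holds with any $C\ge1$ (and the right‐hand side is finite by Step~2 of Theorem~\ref{thm4.2}). The only point requiring genuine care, rather than difficulty, is the translation‐invariance step that collapses the $x$‐dependent row sums into the single series $\sum_{z}\tilde a(z)$; once that is in place, both the Schur estimate and the Cauchy--Schwarz splitting are routine, and the finiteness of $\left\|w^{-1}\right\|_{2}$ is the computation already performed for Theorem~\ref{subcontain}.
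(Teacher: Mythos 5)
Your proposal is correct and follows essentially the same route as the paper: the Schur test gives $\left\|A\right\|_{\mathcal B^{2}}\le\max(R,C)$, translation-invariance collapses the row/column sums to $\sum_{z}\tilde a(z)$, and Cauchy--Schwarz with the splitting $\tilde a=(\tilde a w)\cdot w^{-1}$ yields $\left\|A\right\|_{\alpha,\beta}\left\|w^{-1}\right\|_{2}\le\left\|A\right\|_{\alpha,\beta}\left\|vw^{-1}\right\|_{2}$ since $v\ge 1$. If anything, your write-up is more careful than the paper's, which states the Schur test via a garbled chain of inequalities and leaves the reduction from $\sup_{x}\sum_{y}|a(x,y)|$ to $\sum_{z}\tilde a(z)$ implicit.
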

	\begin{proof}
		By the definition of the operator norm, we have
		\begin{align*}
			\left \| A \right \| _{\mathcal{B}^{2} }&
			\le \sup_{\left \| \xi  \right \|_{2} =1}[ \sum _{x\in G} ( \sum _{y\in G} \left | a(x,y) \right |^{2}  ) (  \sum _{y\in G} \left | \xi (y) \right | ^{2}) ] ^\frac{1}{2}\\
			&\le \sup_{y} (\sum _{x\in G}\left | a(x,y) \right |) ^\frac{1}{2}\cdot \sup_{x}( \sum _{y\in G}\left | a(x,y) \right |) ^\frac{1}{2}\\
			&\le \max (\sup_{x\in G}\sum _{y\in X}\left | a(x,y) \right |, \sup_{y\in G}\sum _{x\in X}\left | a(x,y) \right |).
		\end{align*}
		Using the Cauchy-Schwarz Inequality we get 
		\begin{align*}
			\left \| A \right \| _{\mathcal{B}^{2} }
			&\le (\sum _{z\in G}  (\sup_{\left \{ x,y\in G,y^{-1}x=z \right \} } \left | a(z) \right | w(z))^2 )^\frac{1}{2} \cdot(\sum_{z\in G}  w^{-2}(z))^\frac{1}{2}\notag\\
			&
			\le \left \| A \right \|_{\alpha,\beta}\left \| w^{-1} \right \| _2\le C\left \| A \right \|_{\alpha,\beta}\left \| vw^{-1} \right \|_2,
		\end{align*}
		which implies  (\ref{lemmamax}).
	\end{proof}
	
	\begin{theorem}\label{spectral subgrow}
		Let $G$ be a countable discrete group with a proper length function $l$, satisfying  property $P$. Then, the  algebra $ R^{\infty }(G)$ is a spectral invariant dense subalgebra of the uniform Roe algebra $C_{u}^{*}(G)$.
	\end{theorem}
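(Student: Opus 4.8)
The plan is to derive the theorem from Hulanicki's lemma, so density is a side issue and inverse-closedness is the real point. Density is immediate: a finitely propagated bounded operator has its dominating vector supported in a finite ball, so $C_u[G]\subseteq R_{\alpha,\beta}(G)$ for all $\alpha>0,\ 0<\beta<1$, and with Theorem~\ref{subcontain} this gives $C_u[G]\subseteq R^\infty(G)\subseteq C_u^*(G)$ while $C_u[G]$ is dense in $C_u^*(G)$ by definition. For inverse-closedness, since $R^\infty(G)$ is a $\ast$-algebra ($\|T^\ast\|_{\alpha,\beta}=\|T\|_{\alpha,\beta}$ because $l(z^{-1})=l(z)$) and $a$ is invertible in $C_u^*(G)$ iff $a^\ast a$ is, it suffices to treat $a=a^\ast\in R_{\alpha,\beta}(G)$ and to prove $r_{R_{\alpha,\beta}(G)}(a)=r_{C_u^*(G)}(a)$. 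As $r_{C_u^*(G)}(a)=\|a\|_{\mathcal{B}(\ell^2(G))}$ for self-adjoint $a$, and $r_{C_u^*(G)}(a)\le r_{R^\infty(G)}(a)\le r_{R_{\alpha,\beta}(G)}(a)$ by the inclusions $R_{\alpha,\beta}(G)\subseteq R^\infty(G)\subseteq C_u^*(G)$, the only nontrivial inequality will be $r_{R_{\alpha,\beta}(G)}(a)\le\|a\|_{\mathcal{B}(\ell^2(G))}$.

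To get this I would first establish a norm-splitting estimate for products, using the $(2,r)$-admissibility of $w$ proved in Theorem~\ref{thm4.2}. Writing a product $ST$ through the middle index, expanding the weight across that index by~(\ref{w1}), splitting the inner sum according to whether the middle index lies in $B(x,\tau)$ (resp.\ $B(y,\tau)$) or not, estimating the near part by a Young/Schur bound against $v\chi_{B(\cdot,\tau)}$ and the far part against $(vw^{-1})\chi_{X\setminus B(\cdot,\tau)}$ — these being exactly the quantities $a_{r'}(\tau),\,b_2(\tau)$ of~(\ref{w2}) — and controlling operator norms via the lemma preceding this theorem and~(\ref{w22}), I expect an inequality of the form
\[
\|ST\|_{\alpha,\beta}\;\le\;C\Big(a_{r'}(\tau)\big(\|S\|_{\alpha,\beta}\|T\|_{\mathcal{B}(\ell^2(G))}+\|S\|_{\mathcal{B}(\ell^2(G))}\|T\|_{\alpha,\beta}\big)+b_2(\tau)\|S\|_{\alpha,\beta}\|T\|_{\alpha,\beta}\Big),\qquad\tau\ge1 .
\]
The auxiliary weight $v(x,y)=\exp(\alpha(2^\beta-1)\rho_l(x,y)^\beta)$ is itself of the form $\exp(\alpha'\rho_l^\beta)$ with $\alpha'<\alpha$, so whatever loss of parameters the iteration below incurs, the resulting inverse still lands in some $R_{\alpha',\beta}(G)\subseteq R^\infty(G)$; this is why the union $R^\infty(G)$, rather than a single $R_{\alpha,\beta}(G)$, is the natural object.

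Next I would iterate on dyadic powers. Since $a$ is self-adjoint, $\|a^{2^{k+1}}\|_{\mathcal{B}(\ell^2(G))}=\|a^{2^k}\|_{\mathcal{B}(\ell^2(G))}^2$, hence $\|a^{2^k}\|_{\mathcal{B}(\ell^2(G))}=\|a\|_{\mathcal{B}(\ell^2(G))}^{2^k}$. Applying the displayed estimate with $S=T=a^{2^k}$ and optimizing over $\tau$ through the admissibility bound $\inf_{\tau\ge1}(a_{r'}(\tau)+b_2(\tau)t)\le Dt^\theta$ of~(\ref{w2}) — with the $\theta\in(\tfrac1{3-2^\beta},1)$ produced in the proof of Theorem~\ref{thm4.2} — yields, for $u_k:=\|a^{2^k}\|_{\alpha,\beta}$, a recursion of the shape $\log u_{k+1}\le C+(1+\theta)\log u_k+(1-\theta)2^k\log\|a\|_{\mathcal{B}(\ell^2(G))}$. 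Dividing by $2^{k+1}$ and using $\tfrac{1+\theta}{2}<1$, the iteration drives $s_k:=2^{-k}\log u_k$ to its fixed point $\log\|a\|_{\mathcal{B}(\ell^2(G))}$, so $\limsup_k u_k^{1/2^k}\le\|a\|_{\mathcal{B}(\ell^2(G))}$. Since $r_{R_{\alpha,\beta}(G)}(a)=\lim_n\|a^n\|_{\alpha,\beta}^{1/n}$ equals this dyadic limit (Fekete), we obtain $r_{R_{\alpha,\beta}(G)}(a)\le\|a\|_{\mathcal{B}(\ell^2(G))}$, hence equality of all three spectral radii. Hulanicki's lemma then gives inverse-closedness of the weighted algebra in $C_u^*(G)$, and ranging over $\alpha,\beta$ shows $R^\infty(G)$ is inverse-closed; together with density this proves the theorem.

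The hard part will be the norm-splitting estimate of the second paragraph: it forces one to rewrite $\|\cdot\|_{\alpha,\beta}$ as a convolution-domination norm on $\ell^2(G)$, push the two weights $w$ and $v$ through a Young/Schur computation simultaneously, and keep the truncation radius $\tau$ free so that~(\ref{w2}) can be applied — the same bookkeeping that makes the admissible-weight technique of Sun \cite{S2007,S2011} delicate for operators rather than convolution kernels. A secondary point requiring care is the passage from the fixed Banach algebra $R_{\alpha,\beta}(G)$ to the inductive union $R^\infty(G)$, i.e.\ tracking whether inverses stay within the original parameters or must be recovered from a weaker-weight subalgebra $R_{\alpha',\beta}(G)$.
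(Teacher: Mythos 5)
Your proposal is correct and follows essentially the same route as the paper: the heart of both arguments is the splitting of the convolution sum for a product by a truncation radius $\tau$, using the $(2,r)$-admissibility of $w$ from Theorem~\ref{thm4.2} together with \eqref{w2} to obtain $\|A^2\|_{\alpha,\beta}\le C\|A\|_{\alpha,\beta}^{1+\theta}\|A\|_{\mathcal{B}^2}^{1-\theta}$, followed by iteration to show that $\|A^n\|_{\alpha,\beta}$ grows only subexponentially faster than $\|A\|_{\mathcal{B}^2}^n$. The only (inessential) difference is the endgame: you iterate on dyadic powers of a self-adjoint element and invoke Hulanicki's lemma via the spectral-radius identity, whereas the paper iterates over the full binary expansion of $n$ and then sums a Neumann series for $I-B$ with $B=I-\tfrac{2A^*A}{C_1+C_2}$; both conclude by passing from $A^*A$ back to $A$.
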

	\begin{proof}
		To establish the spectral invariance property, the proof proceeds in two key steps: (i) deriving an estimate for the 
		$n$-th power of $A$, and (ii) verifying the inverse-closed property.
		
		\textbf{Step 1.} For any $A\in R^{\infty}(G)$ and $n\ge 1$, the following  inequality holds
		\begin{align}\label{estim}
			\left \| A^{n} \right \|_{\alpha,\beta } \le C\left ( C\left \| A \right \|_{\alpha,\beta }\left \| A \right \|_{\mathcal{B} ^{2}}^{-1}   \right )^{\frac{1+\theta }{\theta }n^{\log_{2}{(1+\theta )} } } (\left \| A \right \|_{\mathcal{B} ^{2}})^{n}.
		\end{align}
		
		\noindent 
		Let $A=(a(x,y))_{x,y\in G}\in R_{\alpha,\beta}(G)$, and $A^2=(c(x,y))_{x,y\in G}$. For simplify, we denote
		$ w(x)=w(x,I )$, $v(x)=v(x,I ),\tilde{a}(x)=\sup_{y\in G } 
		\left | a(yx,y) \right |$ and $\tilde b(x)=\sup_{y\in G}\left | b(yx,y) \right |$, where $I$ is the identity element of the discrete group $G$.
		
		\noindent Since $w(z)=\exp(\alpha \rho_{l }(z)^{\beta})$ is an admissible weight by Theorem \ref{thm4.2}, it follows that for any $x\in G$,
		\begin{align}
			\label{6.10}
			\tilde{c}w(x) &\le C\sup_{y\in G}\sum _{z\in G}(\left | (av)(yx,z) \right | \left | (aw) (z,y)\right |+\left | (aw)(yx,z) \right | \left | (av) (z,y)\right |).
		\end{align}
		Moreover, we get
		\begin{align}\label{norm}
			&\sup_{y\in G}\sum _{z\in G}\left | a(yx,z) \right | v\left ( yx,z \right )\left | a(z,y) w(z,y)\right |\notag\\
			&\leq\sup_{y\in G}(\sum _{z\in G}(v(yx,z)\chi _{\rho (yx,z)<\tau }\left | a(z,y) \right | w(z,y))^2 )^\frac{1}{2}  ( \sum _{z\in G} \left | a(yx,z) \right |^2 )^\frac{1}{2} \notag\\
			&\quad+\sup_{y\in G}\sum _{z\in G}\left | a(yx,z) \right | v\left ( yx,z \right  )\chi _{\rho (yx,z)\ge \tau}\left | a(z,y) w(z,y)\right |\notag\\
			&\le (\sum_{z\in G}( v(z^{-1}x)\chi_{\rho (x,z)<\tau}\left | \tilde a(z) \right |  w(z))^{2})^\frac{1}{2} \left \| A \right \|_{\mathcal{B}^{2} }\notag\\
			&\quad +\sum_{z\in G} (\tilde {a}(z^{-1}x) v(z^{-1}x)\chi_{\rho (x,z)\ge 
				\tau}\left | \tilde {a}(z) \right |  w(z)),
		\end{align}
		
		\noindent For any $\tau \ge 1$, based on (\ref{w2}) and (\ref{norm}) we obtain
		\begin{align}\label{6.8}
			&  \|( \sup_{y\in G}\sum _{z\in G}\left | (av)(yx,z) \right | \left | (aw) (z,y)\right | )_{x\in G} \|_{2}\notag \\
			&\leq \inf_{\tau \ge 1}(\left \| \tilde {a} w \right \|_{2}\left \|  v\chi _{B(I_{0},\tau )} \right \|_{2}\left \| A \right \|_{\mathcal{B} ^{2}} +\left \| \tilde {a} w \right \|_{2}\left \| (\tilde {a} v)\chi _{X\setminus B(I_{0},\tau )} \right \|_{2}) \notag\\
			&\leq C\left \| A \right \|_{\alpha,\beta }\left \| A \right \| _{\mathcal{B} ^{2}} \inf_{\tau \ge 1}(\left \|  v\chi _{B(I_{0},\tau )} \right \|_{2} +\frac{\left \| A \right \|_{\alpha,\beta } }{\left \| A \right \|_{\mathcal{B}^{2}} } \left \| ( v w^{-1})\chi _{X\setminus B(I_{0},\tau )} \right \|_{2})\notag \\
			&\leq C\left \| A \right \|_{\alpha,\beta }\left \| A \right \| _{\mathcal{B} ^{2}}D(\frac{\left \| A \right \|_{\alpha,\beta } }{\left \| A \right \|_{\mathcal{B} ^{2}} })^{\theta } =C\left \| A \right \|_{\alpha,\beta }^{1+\theta }\left \| A \right \| _{\mathcal{B} ^{2}}^{1-\theta }.
		\end{align}
		Similarly, we get
		\begin{equation}
			\label{6.9}
			\|( \sup_{y\in G}\sum _{z\in G}\left | (aw)(yx,z) \right | \left | (av) (z,y)\right | )_{x\in G} \|_{2}\le C\left \| A \right \|_{\alpha,\beta}^{1+\theta }\left \| A \right \| _{\mathcal{B} ^{2}}^{1-\theta }.
		\end{equation}
		Then, combining (\ref{6.10}), (\ref{6.8}) and (\ref{6.9}), we get 	\begin{align*}
			\left \| A ^2 \right \| _{\alpha,\beta}\le C \left \| A \right \|_{\alpha,\beta}^{1-\theta }  \left \| A \right \|_{\mathcal{B}^2 }^{1-\theta } \quad\text {for~all}~ A\in R^{\infty}\left ( G \right )~\text{and}~C>0,
		\end{align*}
		which indicates
		\begin{equation*}
			\left \| A^{2n} \right \|_{\alpha,\beta } 
			\le  D\left \| A^{n} \right \|_{\alpha,\beta }^{1+\theta }\left \| A \right \| _{\mathcal{B} ^{2}}^{n( 1-\theta) }~\text{and}~	\left \| A^{2n+1} \right \|_{\alpha,\beta } \le D_{1}\left \| A \right \|_{\alpha,\beta } \left \| A^{n} \right \|_{\alpha,\beta }^{1+\theta }\left \| A \right \| _{\mathcal{B} ^{2}}^{n(1-\theta) }
		\end{equation*}
		for some positive constants $D$ and $D_1$. Without loss of generality, we assume $D_1\geq D$, and $D_1\|A\|_{\mathcal{B}^2}\geq 1$ and define the sequence $\left \{ b_{n} \right \} $ by
		\begin{equation}\label{Dbn}
			b_{n}=D_{1}^{\frac{1}{\theta } }\left \| A^{n} \right \| _{\alpha,\beta }\left \| A \right \|_{\mathcal{B} ^{2}}^{-n} \quad\text{for~all}~ n\ge1,
		\end{equation}
		satisfying
		\begin{equation*}
			b_{2n}\le b_{n}^{1+\theta } \quad \text {and} \quad b_{2n+1}\le b_{1}b_{n}^{1+\theta }    \quad\text{for~all}~ n\ge 1,
		\end{equation*}
		which implies that
		\begin{align}\label{bn}
			b_{n}\le b_{1}^{\sum_{i=0}^{k } \varepsilon _{i}(1+\theta) ^{i}}\quad \text{for}~n=\sum_{i=0}^{k}\varepsilon _{i}2^{i}, \varepsilon _{i}\in \left \{ 0,1 \right \}.
		\end{align}
		For the index 
		$\sum_{i=0}^{k } \varepsilon _{i}(1+\theta) ^{i}$ in (\ref{bn}), we have
		\begin{align}\label{eso}
			{\sum_{i=0}^{k } \varepsilon _{i}(1+\theta) ^{i}}\le {\sum_{i=0}^{k } (1+\theta) ^{i}}
			\le\frac{(1+\theta)}{\theta}(1+\theta)^{k}.
		\end{align}
		Since $n=\sum_{i=0}^{k}\varepsilon _{i}2^{i}\le\sum_{i=0}^{k}2^{i}\le 2^{k+1}-1$, we have $k= \lfloor \log_2(1+n)\rfloor-1$, where $\lfloor x\rfloor$ denotes the greatest integer less than or equal to $x$. 
		
		\noindent It follows from (\ref{Dbn})- (\ref{eso}) that
		\begin{align*}
			D_{1}^{\frac{1}{\theta } }\left \| A^{n} \right \| _{\alpha,\beta }\left \| A \right \|_{\mathcal{B} ^{2}}^{-n}&\le \Big ( D_{1}^{\frac{1}{\theta } }\left \| A \right \| _{\alpha,\beta }\left \| A \right \|_{\mathcal{B} ^{2}}^{-1} \Big ) ^{{\sum_{i=0}^{k } \varepsilon _{i}(1+\theta) ^{i}} }\notag\\
			&\le \Big ( D_{1}^{\frac{1}{\theta } }\left \| A \right \|_{\alpha,\beta } \left \| A \right \|^{-1} _{\mathcal{B} ^{2}} \Big ) ^{\frac{(1+\theta)}{\theta}n^{\log_2(1+\theta)}}.
		\end{align*}
		Therefore, we get the desired estimate (\ref{estim}).
		
		{\bf Step 2.} For any $A\in  R^{\infty }(G)$, we have $A^{-1}\in  R^{\infty }(G)$.
		
		For any $A=(a(x,y))_{x,y\in G}\in  R^{\infty }(G)$, we define its transpose $A^{*}=(\overline{a(y,x)} )_{x,y\in G}$. Then, we have $A^{*}A\in  R^{\infty }(G)$ and  $\left \| A \right \|_{\alpha,\beta }= \left \| A^{*} \right \|_{\alpha,\beta }$. 
		\noindent Moreover, we define the matrix $B\in \mathcal{B} (\ell^{2}(G))$ by 
		$$	B=I-\frac{2 A^{*}A}{C_2+C_1 }.$$
		Since $A^{*}A$ is a positive operator, there exist constants $C_1>0$, $C_2>0$ such that $C_{1}I\le A^{*}A\le C_{2}I$ and
		\begin{equation}\label{2}
			\left \| B \right \|_{\mathcal{B} ^{2}}\le\frac{C_2-C_1}{ C_2+C_1} <1  \quad \text{and}\quad  \left \| B \right \|_{\alpha,\beta }<\infty.
		\end{equation}
		It follows from (\ref{estim}) and (\ref{2}) that  
		\begin{align*}
			\left \|( I-B)^{-1} \right \|_{\alpha,\beta } 
			\le \sum_{n=0}^{\infty} C\Big ( C\left \| B \right \|_{\alpha,\beta } \left \| B \right \|_{\mathcal{B}^{2}}^{-1} \Big  )^{\frac{1+\theta }{\theta }n^{\log_{2}{(1+\theta )} } } (\left \| B \right \|_{\mathcal{B}^{2}}) ^{n}<\infty ,
		\end{align*}
		which implies that $(A^{*}A)^{-1}\in R^{\infty}(G)$.
		Consequently, we deduce that $A^{-1}\in  R^{\infty }(G)$, as $A^{-1}=(A^{*}A)^{-1}A^{*}$.\par
		Thus, $ R^{\infty }(G)$ is spectral invariant in $C_{u}^{*}  \left ( G \right )$.
	\end{proof}
	Theorem \ref{spectral subgrow} still holds when the group $
	G$ has polynomial growth.
	\begin{corollary}
		If $G$ has polynomial growth, the algebra $ R^{\infty }(G)$ is a spectral invariant  subalgebra of the uniform Roe algebra $C_{u}^{*}(G)$.
	\end{corollary}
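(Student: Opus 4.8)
The plan is to deduce the corollary directly from Theorem~\ref{spectral subgrow} by checking that polynomial growth is a special case of property $P$. Recall that $G$ having polynomial growth means there are constants $C>0$ and $d\in\mathbb{N}$ with $|B(x,r)|\le C(1+r)^{d}$ for all $x\in G$ and all $r>0$. So the only thing I would actually verify is the growth bound in the definition of property $P$, namely that for every $\alpha>0$ and every $0<\beta<1$ there is a constant $C_{\alpha,\beta}$ with $|B(x,r)|\le C_{\alpha,\beta}\exp(\alpha r^{\beta})$ for all $x\in G$ and $r>0$.

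To get this, fix $\alpha>0$ and $\beta\in(0,1)$. Since $\ln(1+r)=o(r^{\beta})$ as $r\to\infty$, one has $\lim_{r\to\infty}\frac{d\ln(1+r)}{\alpha r^{\beta}}=0$, so there exists $r_{0}\ge 1$ with $d\ln(1+r)\le \alpha r^{\beta}$, hence $(1+r)^{d}\le\exp(\alpha r^{\beta})$, for all $r\ge r_{0}$. For $0<r<r_{0}$ we have $(1+r)^{d}\le(1+r_{0})^{d}$ while $\exp(\alpha r^{\beta})\ge 1$. Setting $C_{\alpha,\beta}=C\max\{1,(1+r_{0})^{d}\}$ then gives $|B(x,r)|\le C(1+r)^{d}\le C_{\alpha,\beta}\exp(\alpha r^{\beta})$ for all $x\in G$ and all $r>0$. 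Thus $G$ satisfies property $P$ (and a fortiori has subexponential growth, as noted after its definition).

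With property $P$ established, Theorems~\ref{thm4.2}, \ref{banach-2} and \ref{subcontain} all apply to $G$, so each $R_{\alpha,\beta}(G)$ is a Banach algebra, $R^{\infty}(G)\subseteq C_{u}^{*}(G)$, and the subexponential weight $w(x,y)=\exp(\alpha\rho_{l}(x,y)^{\beta})$ is $(2,r)$-admissible. Consequently Theorem~\ref{spectral subgrow} applies verbatim and yields that $R^{\infty}(G)$ is a spectral invariant dense subalgebra of $C_{u}^{*}(G)$, which is the claim.

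There is essentially no obstacle here: the entire content is the elementary observation that any polynomially bounded function is dominated by $\exp(\alpha r^{\beta})$ for arbitrary $\alpha>0$ and $\beta\in(0,1)$, after which the corollary is an immediate specialization of Theorem~\ref{spectral subgrow}. If one wished to be fully self-contained one could also remark that in the polynomial case the admissibility exponent $\theta$ in Theorem~\ref{thm4.2} can be taken in $\bigl(\tfrac{1}{3-2^{\beta}},1\bigr)$ exactly as there, so no quantitative estimate needs to be redone.
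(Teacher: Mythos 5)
Your proposal is correct and follows the same route as the paper: reduce to Theorem~\ref{spectral subgrow} by observing that polynomial growth implies property $P$. The only difference is that you actually verify the elementary domination $(1+r)^{d}\le C_{\alpha,\beta}\exp(\alpha r^{\beta})$, which the paper asserts without proof, so your write-up is if anything more complete.
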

	\begin{proof}
		If $G$ has polynomial growth, we know that $G$ also satisfies property $P$. The weight $w(x,y)=\exp(\alpha \rho_{l }(x,y)^{\beta})$ is a $(2,r)$-admissible weight.
		Therefore, by applying the similar argument used in the proof of Theorem \ref{spectral subgrow}, we conclude that $ R^{\infty }(G)$ is spectrally invariant.
	\end{proof}
	
	\section*{Acknowledgements}
	This research of Xian-Jin Wang was partially supported by NSFC General  Projects No.12071183 and No.11771061. 
	The authors declare that they have no conflict of interest.
	The authors declare that they have no conflict of interest. Data sharing is not applicable as no datasets were generated or analyzed in this study.
	
	\bibliographystyle{abbrv}
	\bibliography{ref1}	
\end{document}